\def\qed{\hfill $\Box$}
\newcommand{\confrac}[2]{%
  \frac{\displaystyle{%
    \strut\hfill{#1}\hfill\;\vrule}}%
      {\displaystyle{%
       \strut\vrule\;\hfill{#2}\hfill}}}%
\theoremstyle{plain}
\newtheorem{thm}{Theorem}[section]
\newtheorem{lem}[thm]{Lemma}
\newtheorem{cor}[thm]{Corollary}
\newtheorem{pro}[thm]{Proposition}
\newtheorem*{syuA}{Main Theorem}
\theoremstyle{definition}
\newtheorem*{prf*}{Proof}
\newtheorem*{pf*}{}
\newtheorem*{lem*}{LemmaA}
\newtheorem*{lm*}{LemmaB}
\newtheorem*{clm*}{Claim}
\newtheorem*{stra*}{Strategy for the proof of main result A}
\def\g2{l\ge2}
\def\m2l{\mathcal{L}_2}
\title[Sets with restricted, slowly growing partial quotients]{Hausdorff dimension of sets with\\ restricted, slowly growing partial quotients\\
in semi-regular continued fractions}
\author{Yuto Nakajima and Hiroki Takahasi}
\address{Department of Mathematics,
Keio University, Yokohama,
223-8522, JAPAN}
\email{nakajimayuto@math.keio.ac.jp}
\address{Department of Mathematics,
Keio University, Yokohama,
223-8522, JAPAN} 
\email{hiroki@math.keio.ac.jp}
\subjclass[2020]{Primary 11A55; Secondary 37C45, 37C60}
\thanks{{\it Keywords}: continued fractions; Hausdorff dimension; iterated function system (IFS)}
\begin{document}
\maketitle
\begin{abstract}
We determine the Hausdorff dimension of sets of irrationals in $(0,1)$ whose partial quotients in semi-regular continued fractions obey  certain restrictions and growth conditions. This result substantially generalizes
that of the second author [Proc. Amer. Math. Soc. {\bf 151} (2023), 3645--3653] and the solution of Hirst's conjecture [B.-W. Wang and J. Wu, Bull. London Math. Soc. {\bf 40} (2008), 18--22], both previously obtained for the regular continued fraction.
To prove the result, we 
construct non-autonomous iterated function systems well-adapted to the given restrictions and growth conditions on partial quotients, estimate the associated pressure functions, and then apply Bowen's formula.
\end{abstract}
\section{Introduction}
   Each irrational number $x$ in the set $\mathbb I=(0,1)\setminus\mathbb Q$
   has a unique
 regular continued fraction (RCF) expansion 
 $x=
 1/(a_1+1/(a_2+\cdots))$, where $a_n$, $n\geq1$ belongs to the set $\mathbb N$ of positive integers.
For a typical number in $\mathbb I$ in the sense of the Lebesgue measure, 
 Khinchin \cite{Khi64} proved that
 the frequency with which
 each integer $k\in\mathbb N$ appears as 
 its RCF partial quotients is $\frac{1}{\log2}\log\frac{(k+1)^2}{k(k+2)}$.
 Although Khinchin did not use ergodic theory in his original proof, his result is a consequence of Birkhoff's theorem applied to the Gauss map 
 leaving the Gauss measure $\frac{1}{\log2}\frac{dx}{1+x}$ invariant and ergodic.

 Numbers in $\mathbb I$ whose RCF partial quotients behave very differently from the Lebesgue typical ones are not negligible in the sense of the Hausdorff dimension.
 This fact lies at the basis of the fractal dimension theory of continued fractions.
A pioneering result in the creation of the theory is due to Jarn\'ik \cite{Jar28}, who proved that the set 
\[\left\{x\in\mathbb I\colon \{a_n(x)\}\text{ is bounded}\right\},\]
i.e., the set of badly approximable numbers \cite[Theorem~23]{Khi64},
is of Hausdorff dimension $1$.
Later on,
Good \cite{Goo41} proved that the set
\[\left\{x\in\mathbb I\colon \lim_{n\to\infty}a_n(x)=\infty\right\}\]
is of Hausdorff dimension $1/2$.
Various extensions and refinements of this Good's theorem have been made (see \cite{CWW13,Cus90,FLWJ09,FWLT97,Hir70,Hir73,JaeKes10,JorRam12,Luc97,Moo92,Mun11,Ram85,T,WanWu08} for example)
which determine fractal dimensions of sets of numbers in $\mathbb I$ whose RCF partial quotients obey certain
restrictions or growth conditions.

Among others, Hirst \cite{Hir73} proved results analogous to that of Good \cite{Goo41} in the case where $a_n$
 is restricted to belonging to some subsets of $\mathbb N$.
 For an infinite subset $B$ of $\mathbb N$,
 define the {\it exponent of convergence} by
 \[\tau(B)=\inf\left\{s\geq0\colon\sum_{
 k\in B}k^{-s}<\infty\right\}.\]
 In \cite{Hir73},
 Hirst considered the set
 \[E(B)=\left\{x\in\mathbb I\colon a_n(x)\in B\text{ for all } n\in\mathbb N\text{ and } \lim_{n\to\infty}a_n(x)=\infty\right\},\]
 and conjectured that 
 \begin{equation}\label{Hirst}\dim_{\rm H}E(B)=\frac{\tau(B)}{2}\ \text{ for any }B,\end{equation}
where $\dim_{\rm H}$ denotes the Hausdorff dimension.
He treated
 the special case $B=\{k^b\}_{k\in\mathbb N}$, $b$
 a positive integer
 to support his conjecture (see \cite[Theorem~3]{Hir73}).
 Cusick \cite[Theorem~1]{Cus90} proved that the equality holds in the case $B$ is not too sparse and satisfies what he called the density assumption.
 35 years later than the appearance of Hirst's paper \cite{Hir73},
  his conjecture was confirmed
   by
 Wang and Wu \cite[Theorem~1.1]{WanWu08}. 
 This solution was extended in
  \cite{CWW13} 
 to certain infinite iterated function systems 
  satisfying the so-called open set condition \cite{Fal14}.

In this context, 
subsets of $E(B)$ of the form
 \[F(B,f)=\{x\in E(B)\colon a_n(x)\geq f(n)\text{ for all } n\in\mathbb N\},\]
 and
 \[G(B,f)=\left\{x\in E(B)\colon a_n(x)\leq f(n)\text{ for all }n\in\mathbb N\right\},\]
 where $f\colon\mathbb{N}\rightarrow \mathbb N$ is a function satisfying $\lim_{n\to \infty}f(n)=\infty$ 
  are also relevant to consider. 
  In \cite[p.227]{Hir73}, Hirst conjectured  the equality
$\dim_{\rm H}F(B,f)=\tau(B)/2$ no matter how rapidly $f$ grows, and 
 Cusick later showed the case $B=\mathbb N$, $f(n)=2^{2^{2^n}}$ 
as a counterexample (see \cite[Lemma~3]{Cus90}).
Nowadays it is known that $\dim_{\rm H}F(B,f)$ can drop from $\tau(B)/2$ and even become $0$ when $f$ grows very rapidly \cite{CWW13,Cus90,FWLT97,JorRam12,Luc97,Moo92,WanWu08II}, whereas 
  ${\rm dim}_{\rm H}G(B,f)$ does not drop at all no matter how slowly $f$ grows to infinity \cite{T}: Interestingly we have
  \begin{equation}\label{Takahasi}
  \dim_{\rm H}G(B,f)=\frac{\tau(B)}{2}\ \text{ for any $B$ and any $f$ with $f(\mathbb N)\subset[\min B,\infty)$.}
  \end{equation}

The aim of this paper is to 
 extend
the dimension results \eqref{Hirst}, \eqref{Takahasi} on the RCF to the semi-regular continued fraction (SRCF).
For each
$\sigma=\{\sigma_n\}_{n=1}^\infty\in\{-1,1\}^\mathbb N$, any $x\in\mathbb I$ has a unique SRCF expansion of the form 
\begin{equation}\label{CF-general}
x=\cfrac{1}{a_{\sigma,1}(x)+\cfrac{\sigma_1}{a_{\sigma,2}(x)+\cfrac{\sigma_2 }{a_{\sigma,3}(x)+\ddots}}}=
\confrac{1 }{a_{\sigma,1}(x)} + \confrac{\sigma_1 }{a_{\sigma,2}(x)}  + \confrac{\sigma_{2} }{a_{\sigma,3}(x) }+\cdots,
\end{equation}
where 
 $a_{\sigma,n}=a_{\sigma,n}(x)$ are positive integers such that $\sigma_n+a_{\sigma,n}\geq1$
for all $n\geq1$. 
This means that the
finite truncation 
\[\cfrac{1}{a_{\sigma,1}+\cfrac{\sigma_1 }{a_{\sigma,2}+\cdots+\cfrac{\ddots}{\displaystyle{\frac{\sigma_{n-1}}{a_{\sigma,n}}}}}}=\confrac{1 }{a_{\sigma,1}} + \confrac{\sigma_1 }{a_{\sigma,2}}  +\cdots+ \confrac{\sigma_{n-1} }{a_{\sigma,n} }\]
converges to $x$ as $n\to\infty$.

The SRCF, introduced by Perron \cite{Per50} in a slightly more general form,
  includes several continued fractions as particular cases. 
If $\sigma_n=1$ for all $n\geq1$ we obtain the RCF, and if
 $\sigma_n=-1$ for all $n\geq1$ we obtain the backward continued fraction (BCF). 
 For other examples and motivations for the SRCF, see
 \cite{Bos87,DK99,IosKra02,Kra91,Per50}.
 Any SRCF can be transformed into a regular one, as shown in \cite{Per50}. The converse is also true \cite{DK99}.


 For the SRCF for which $\sigma\in\{-1,1\}^\mathbb N$ is not eventually constant,
 there is no single interval transformation that generates the expansion. Therefore, developing 
a dimension theory on fractal sets determined by the SRCF requires novel 
approaches.

\subsection{Statements of the results}
To obtain results analogous to \eqref{Hirst} and \eqref{Takahasi},
for each
$\sigma=\{\sigma_n\}_{n=1}^\infty\in\{-1,1\}^\mathbb N$ 
we consider sets 
  \[E_\sigma(B)=\left\{x\in\mathbb I\colon a_{\sigma,n}(x)\in B\text{ for all } n\in\mathbb N\text{ and } \lim_{n\to \infty}a_{\sigma, n}(x)=\infty\right\},\]
  and
    \[G_{\sigma}(B, f)=\left\{x\in E_\sigma(B)\colon a_{\sigma, n}(x)\le f(n)\ \mbox{for all } n\in \mathbb{N} \right\}.\]
    Clearly, we have
$\dim_{\rm H}E_\sigma(B)\geq\dim_{\rm H}G_{\sigma}(B, f)$.
\begin{syuA}
\label{renbun}
For any $\sigma\in \{-1, 1\}^{\mathbb{N}},$ any infinite subset $B$ of $\mathbb{N}$ and any function  $f:\mathbb{N}\rightarrow [\min B, \infty)$ such that $\lim_{n\to \infty}f(n)=\infty,$ we have
\[\dim_{\rm H}E_\sigma(B)=\dim_{\rm H}G_{\sigma}(B, f)
=\frac{\tau(B)}{2}.\]
\end{syuA}





Analogues of
Good's theorem \cite{Goo41} 
on the Hausdorff dimension of sets with divergent partial quotients 
were obtained for other series expansions of numbers, see
\cite{Rob20,Mun11}.
Taking $B=\mathbb N$
in the Main Theorem yields an extension of Good's theorem to the SRCF.
\begin{cor}
For any $\sigma\in \{-1, 1\}^{\mathbb{N}}$, we have \[\dim_{\rm H}\left\{x\in \mathbb{I}\colon \lim_{n\to \infty}a_{\sigma, n}(x)=\infty \right\}=\frac{1}{2}.\]\end{cor}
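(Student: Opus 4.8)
The plan is to derive this corollary directly from the Main Theorem by specializing the infinite set $B$ to all of $\mathbb{N}$. First I would observe that the partial quotients $a_{\sigma,n}(x)$ appearing in any SRCF expansion \eqref{CF-general} are, by construction, positive integers; hence the membership constraint $a_{\sigma,n}(x)\in\mathbb{N}$ entering the definition of $E_\sigma(B)$ is vacuously satisfied when $B=\mathbb{N}$. Consequently $E_\sigma(\mathbb{N})$ coincides exactly with the set $\{x\in\mathbb{I}\colon\lim_{n\to\infty}a_{\sigma,n}(x)=\infty\}$ whose dimension we wish to compute.

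It then remains to evaluate the exponent of convergence $\tau(\mathbb{N})$. Since $\sum_{k=1}^{\infty}k^{-s}$ is the harmonic-type series that converges precisely when $s>1$, the infimum defining $\tau(\mathbb{N})$ equals $1$. Invoking the Main Theorem with $B=\mathbb{N}$---specifically its assertion that $\dim_{\rm H}E_\sigma(B)=\tau(B)/2$---then gives $\dim_{\rm H}E_\sigma(\mathbb{N})=\tau(\mathbb{N})/2=1/2$, the claimed value. No growth function $f$ is needed, since the corollary concerns $E_\sigma$ rather than the smaller set $G_\sigma(B,f)$.

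I expect no genuine obstacle in this argument: all the substantive work---constructing the non-autonomous iterated function systems adapted to $\sigma$, estimating the associated pressure functions, and applying Bowen's formula---is already contained in the proof of the Main Theorem. The only points requiring attention are the two elementary observations above, namely that integrality of the partial quotients renders the condition $a_{\sigma,n}(x)\in\mathbb{N}$ redundant, and that the series defining $\tau(\mathbb{N})$ has convergence threshold $s=1$.
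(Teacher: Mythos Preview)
Your proposal is correct and matches the paper's own reasoning: the corollary is stated immediately after the remark that taking $B=\mathbb N$ in the Main Theorem yields an extension of Good's theorem to the SRCF, and no separate proof is given. Your two elementary observations (that the constraint $a_{\sigma,n}(x)\in\mathbb N$ is vacuous and that $\tau(\mathbb N)=1$) are exactly what is needed to read off the corollary from the Main Theorem.
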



\subsection{The method of proof}
The equality \eqref{Takahasi} 
was proved in \cite{T} by combining
the general upper bound $\dim_{\rm H} E(B)\leq\tau(B)/2$ in \cite[Corollary~1]{Hir73} with
a new lower bound $\dim_{\rm H} G(B,f)\geq\tau(B)/2$, 
obtained in \cite{T} by constructing a fractal subset of $G(B,f)$ and estimating its Hausdorff dimension from below.
The argument in \cite{T} remains valid 
in the case where $\sigma\in\{-1,1\}^\mathbb N$ is eventually constant.
In order to treat all other
 $\sigma$,
 new ingredients are necessary as we develop below.

To obtain an upper bound $\dim_{\rm H}E_\sigma(B)\leq\tau(B)/2$,
we slightly modify the upper bound in \cite[Corollary~1]{Hir73} which relies on the well-known formula for fundamental intervals of the RCF in terms of its convergents \cite{Khi64}. Since this type of formula is not available for the SRCF, we use a bounded distortion argument.

Establishing the lower bound $\dim_{\rm H}G_\sigma(B,f)\geq\tau(B)/2$ 
is the most creative part of this paper, and hence
deserves a special attention with a close comparison to \cite{T}.
The construction of the fractal set in \cite{T} is rather rigid, which relies
on the ergodic theory of the Gauss map. 
Key
components are
a sequence of finite subsystems with increasing digits,
and an associated sequence of ergodic measures,
together with a finite collection of intervals which approximate the ergodic measure in a particular sense.
The intervals in the subsystems are glued together to form a fractal subset of $G(B,f)$.
The Hausdorff dimension of this set is estimated from below
in terms of measure-theoretic entropies and Lyapunov exponents of the ergodic measures.
To obtain stationary sequences
with ergodic arguments (Birkhoff's and Shannon-McMillan-Breiman's theorems),
a sufficiently high iteration of each subsystem is necessary. 
The number of this high iteration is not controllable, and
as a result, \cite{T} cannot be adapted to constructing a fractal subset of $G_\sigma(B,f)$ in the case where  
 $\sigma$ is not eventually constant. 
 
Our strategy for establishing the lower bound is to dispense with the ergodic arguments in \cite{T} altogether, and introduce a much more flexible 
 construction by exploiting
the theory of non-autonomous conformal
iterated function systems (IFSs) developed by Rempe-Gillen and Urba\'nski \cite{RU}. 
In Section~2 we provide preliminary results,
and in Section~3 prove the Main Theorem.
\section{Preliminaries}
In this section we provide preliminary results on the SRCF and non-autonomous conformal IFSs.
In Section~\ref{IFS} we explain a relationship between 
the SRCF in \eqref{CF-general} and an infinite IFS.
In Section~\ref{ncifs} we introduce non-autonomous conformal IFSs and recall the result of Rempe-Gillen and Urba\'nski \cite{RU}. 
In Section~\ref{BD} we establish extendibility and distortion bounds for IFSs generating the SRCF.

 \begin{figure}
\begin{center}
\includegraphics[height=4cm,width=5.5cm]{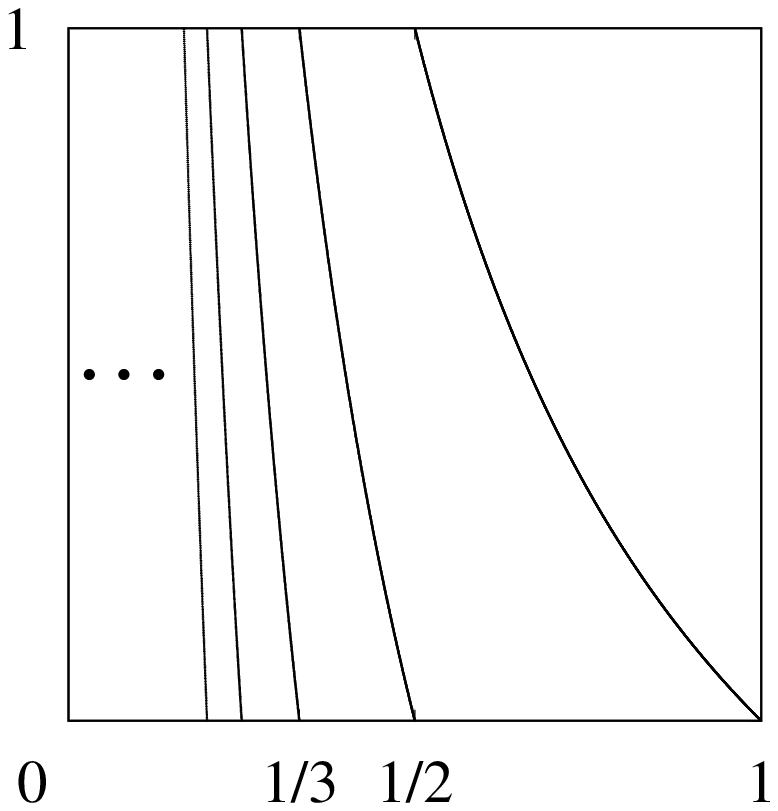}
\includegraphics[height=4cm,width=5.5cm]{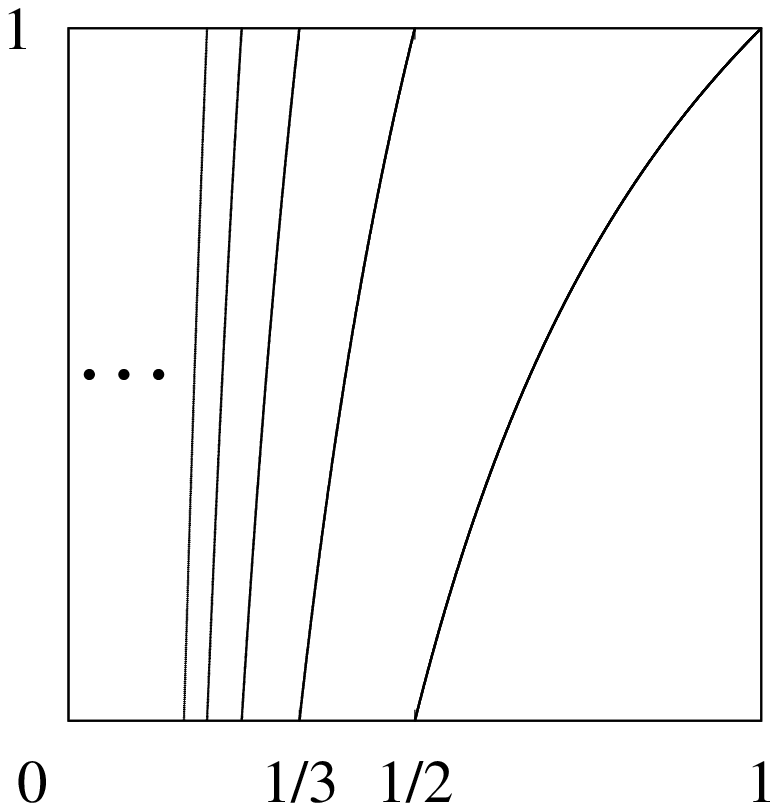}
\caption
{The graph of the Gauss map
$x\in(0,1]\mapsto1/x- \lfloor 1/x\rfloor\in[0,1)$ 
(left): that of the R\'enyi-like map
$x\in(0,1]\mapsto \lfloor1/x\rfloor-1/x+1\in(0,1]$ (right)}\label{GR}
\end{center}
\end{figure}

\subsection{The SRCF as an infinite IFS}\label{IFS}
Consider an infinite IFS $\{\varphi_{1,i}\}_{i=1}^\infty\cup\{\varphi_{-1,i}\}_{i=2}^\infty$ on $[0,1]$ defined by 
\[\varphi_{1, i}(x)=\frac{1}{i+x},\ x\in [0,1]\ \text{ and }\ \varphi_{-1, i}(x)=\frac{1}{i-x},\ x\in [0,1].\] 
These maps are obtained from inverse branches of the Gauss map $x\in(0,1]\mapsto1/x- \lfloor 1/x\rfloor\in[0,1)$ and that of the R\'enyi-like map 
$x\in(0,1]\mapsto \lfloor1/x\rfloor-1/x+1\in(0,1]$ respectively (see \textsc{Figure~\ref{GR}}).
The SRCF in \eqref{CF-general} is generated by this infinite IFS.
We have
\begin{equation}
\label{normal}
\varphi_{1,i}([0,1])=\left[\frac{1}{1+i}\ , \frac{1}{i}\right]\ \text{ and }\ \varphi_{-1,i}([0,1])=\left[\frac{1}{i}\ , \frac{1}{i-1}\right].
\end{equation}
For $\sigma=\{\sigma_n\}_{n=1}^\infty\in\{-1,1\}^\mathbb N$, $n\geq1$ and $a_1,\ldots,a_n\in\mathbb N$ such that
$\sigma_k+a_{k}\geq1$ for all $1\leq k\leq n$,
we define a fundamental interval
\begin{equation}\label{interval}[a_1,\ldots,a_n]_\sigma=\varphi_{\sigma_1, a_1}\circ \cdots \circ \varphi_{\sigma_n, a_n}([0,1]).\end{equation}
This is the closure of the interval which consists of points
having a (finite or infinite) SRCF  expansion beginning by $a_1,a_2,\ldots,a_n$.
Let
$|J|$ denote the Euclidean length of a bounded interval $J\subset\mathbb R$.

The next lemma states convergence properties of the SRCF \cite{Kra91,Per50,Tie11}
and a uniqueness property of the expansion.
It follows that
for each fixed $\sigma$, the SRCF in \eqref{CF-general} is generated by an infinite non-autonomous IFS whose limit set contains $\mathbb I$.

\begin{lem}\label{IFS}
\
\begin{itemize}
\item[(a)] Let $\sigma=\sigma_1\sigma_2\cdots\in\{-1,1\}^{\mathbb N}$
and $a_1a_2\cdots\in \mathbb N^{\mathbb N}$
be such that
 $\sigma_n+a_{n}\geq1$ for all $n\geq1$. The set
\[\bigcap_{n=1}^\infty[a_1,\ldots,a_n]_\sigma\]
is a singleton,
and its unique element equals the semi-regular continued fraction
\[
\confrac{1 }{a_{1}} + \confrac{\sigma_1 }{a_{2}}  + \confrac{\sigma_{2} }{a_{3} }+\cdots.
\]
This continued fraction is irrational 
if and only if $\sigma_n+a_{n}\geq2$ for infinitely many $n\geq1$.
\item[(b)]
 For any $x\in\mathbb I$ and any $\sigma\in\{-1,1\}^{\mathbb N}$,
there exists a unique $a_{1}a_{2}\cdots\in\mathbb N^{\mathbb N}$ such that
 $\sigma_n+a_{n}\geq1$ for all $n\geq1$
  and
  \[x\in\bigcap_{n=1}^\infty[a_1,\ldots,a_n]_\sigma.\]
 \end{itemize}
\end{lem}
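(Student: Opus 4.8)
The plan is to establish the three assertions of part~(a) in turn and then part~(b), working throughout with the generators in \eqref{normal} and writing $\Phi_n=\varphi_{\sigma_1,a_1}\circ\cdots\circ\varphi_{\sigma_n,a_n}$, so that $[a_1,\ldots,a_n]_\sigma=\Phi_n([0,1])$.

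\emph{Part (a).} Since \eqref{normal} together with the standing hypothesis $\sigma_n+a_n\geq1$ shows that every generator maps $[0,1]$ into $[0,1]$ (this is exactly what excludes the single map $\varphi_{-1,1}$, whose image is unbounded), the sets $[a_1,\ldots,a_n]_\sigma$ form a decreasing sequence of nonempty compact intervals, so $\bigcap_n[a_1,\ldots,a_n]_\sigma$ is a nonempty compact interval. That it degenerates to a point, i.e.\ that $|[a_1,\ldots,a_n]_\sigma|\to0$, is precisely the convergence of the semi-regular continued fraction, for which I would invoke \cite{Kra91,Per50,Tie11}; for $\sigma$ not eventually constant this is the one genuinely delicate point, since there is no single interval map underlying the expansion. (One may also read it off the M\"obius representation $\Phi_n(t)=\frac{p_n+\sigma_n p_{n-1}t}{q_n+\sigma_n q_{n-1}t}$, where $p_n,q_n$ obey $p_n=a_np_{n-1}+\sigma_{n-1}p_{n-2}$, $q_n=a_nq_{n-1}+\sigma_{n-1}q_{n-2}$; this yields $|[a_1,\ldots,a_n]_\sigma|=1/(q_n|q_n+\sigma_n q_{n-1}|)$, reducing the claim to $q_n\to\infty$.) A one-line induction identifies $\Phi_n(0)$ with the truncation $\confrac{1}{a_1}+\confrac{\sigma_1}{a_2}+\cdots+\confrac{\sigma_{n-1}}{a_n}$; since $\Phi_n(0)\in[a_1,\ldots,a_n]_\sigma$, the unique point of the intersection is the limit of these truncations, i.e.\ the continued fraction in the statement.

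\emph{The irrationality criterion.} Let $x$ be the value and, for $n\geq1$, set $r_n=\confrac{1}{a_n}+\confrac{\sigma_n}{a_{n+1}}+\cdots$, so that $r_1=x$, each $r_n\in(0,1]$ (using $a_n\geq2$ whenever $\sigma_n=-1$), and $r_n=\varphi_{\sigma_n,a_n}(r_{n+1})$, whence $r_{n+1}=\sigma_n(r_n^{-1}-a_n)$. If $x$ — equivalently some, hence every, $r_n$ — is rational, write $r_n=u_n/v_n$ in lowest terms with $1\leq u_n\leq v_n$; the displayed recursion exhibits $r_{n+1}$ as a fraction with denominator $u_n$, so $v_{n+1}\mid u_n$ and therefore $v_{n+1}\leq u_n\leq v_n$. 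The non-increasing sequence $(v_n)$ of positive integers is eventually a constant $v^\ast$, and then for large $n$ the divisibility $v^\ast\mid u_n$ together with $1\leq u_n\leq v^\ast$ forces $u_n=v^\ast$, i.e.\ $r_n\equiv1$; but $r_n=\varphi_{\sigma_n,a_n}(1)=(a_n+\sigma_n)^{-1}$ then gives $a_n+\sigma_n=1$ for all large $n$. Conversely, if $\sigma_n+a_n=1$ (equivalently $\sigma_n=-1$, $a_n=2$) for all $n\geq N$, then $\varphi_{-1,2}^{\,k}([0,1])=[\tfrac{k}{k+1},1]$ shrinks to $\{1\}$, so $r_N=1$ and $x=\Phi_{N-1}(1)$ is rational. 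Hence $x$ is irrational if and only if $\sigma_n+a_n\geq2$ for infinitely many $n$.

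\emph{Part (b).} By \eqref{normal} the first-level fundamental intervals $\{\varphi_{\sigma_1,i}([0,1])\}$, with $i$ ranging over the indices with $\sigma_1+i\geq1$, cover $(0,1]$ and meet pairwise only in the points $1/i$. As $x\in\mathbb I$ is irrational, it lies in the interior of exactly one of them; this determines $a_1$ uniquely, and $\varphi_{\sigma_1,a_1}^{-1}(x)$ is again an irrational number in $(0,1)$. Repeating the argument with the shifted sign sequence $\sigma_2\sigma_3\cdots$ produces $a_2,a_3,\ldots$, each the unique admissible choice, so that $\varphi_{\sigma_n,a_n}^{-1}\circ\cdots\circ\varphi_{\sigma_1,a_1}^{-1}(x)\in(0,1)$, equivalently $x\in[a_1,\ldots,a_n]_\sigma$, for every $n$. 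Any other admissible sequence $(a_n')$ with $x\in\bigcap_n[a_1',\ldots,a_n']_\sigma$ must have $x\in[a_1]_\sigma\cap[a_1']_\sigma$, which for $a_1'\neq a_1$ is a single endpoint, contradicting irrationality of $x$; so $a_1'=a_1$, and induction after applying $\varphi_{\sigma_1,a_1}^{-1}$ gives $a_n'=a_n$ for all $n$. I expect the only real obstacle in the whole lemma to be the convergence $|[a_1,\ldots,a_n]_\sigma|\to0$ for non-eventually-constant $\sigma$, which is why I would rely on \cite{Kra91,Per50,Tie11} for it rather than reprove it here.
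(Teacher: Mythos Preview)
Your proof is correct and structurally matches the paper's: both establish the formula \eqref{conv-lem} by induction, invoke \cite{Kra91,Per50,Tie11} for the convergence of the SRCF (hence the singleton property), and handle (b) by an inductive choice of $a_n$ using that an irrational $x$ never sits at an interval endpoint. The one substantive difference is the irrationality criterion: the paper simply cites \cite{Kra91,Tie11} for the direction ``$\sigma_n+a_n\geq2$ infinitely often $\Rightarrow$ irrational'' and observes $\confrac{1}{2}+\confrac{-1}{2}+\cdots=1$ for the converse, whereas you give a self-contained argument via the tails $r_n$ and the decreasing-denominator trick. Your route here is more elementary and makes the lemma independent of the irrationality statements in the cited sources, at the cost of a few extra lines; the paper's route keeps the proof shorter by leaning on the classical literature it is already invoking for convergence.
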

\begin{proof}Let $\sigma=\sigma_1\sigma_2\cdots\in\{-1,1\}^{\mathbb N}$
and $a_{1}a_{2}\cdots\in \mathbb N^{\mathbb N}$
 satisfy
 $\sigma_n+a_{n}\geq1$ for all $n\geq1$.
By induction based on \eqref{normal} one can check that for all $n\geq2$
and all $x\in [0,1]$,
\begin{equation}\label{conv-lem}\varphi_{\sigma_1, a_{1}}\circ \cdots \circ \varphi_{\sigma_n, a_{n}}(x)=\confrac{1 }{a_{1}} + \confrac{\sigma_1 }{a_{2}}  +\cdots+ \confrac{\sigma_{n-1} }{a_{n}+\sigma_nx}.\end{equation}
If $\sigma_n+a_{n}\geq2$ for infinitely many $n\geq1$,
then the number in the right-hand side of \eqref{conv-lem} converges and the limit is  irrational \cite{Kra91,Tie11}.
Otherwise, it converges to a rational number
since the continued fraction
\[\confrac{1 }{2 } + \confrac{-1 }{2 }+\confrac{-1}{2}  +\cdots+ \confrac{-1 }{2 }+\cdots\]
is equal to $1$.
The proof of (a) is complete.

Let $x\in\mathbb I$ and let $\sigma\in\{-1,1\}^{\mathbb N}$. We can define inductively a sequence $a_{1},a_{2},\ldots$ in $\mathbb N$
such that
 $\sigma_n+a_{n}\geq1$ and
$x\in[a_{1},\ldots,a_{n}]_\sigma$ 
for all $n\geq1$.
If $b_1,b_2,\ldots$ is an integer sequence such that 
$[a_{1},\ldots,a_{n}]_\sigma=[b_{1},\ldots,b_{n}]_\sigma$ for all $n\geq1$, then
 \eqref{normal} and the irrationality of $x$ yield $a_n=b_n$ for all $n\geq1$.
 The proof of (b) is complete.
\end{proof}


\subsection{Non-autonomous conformal IFSs}\label{ncifs}
Below we summarize 
the dimension theory in \cite{RU} as far as we need them, on non-autonomous conformal IFS on the Euclidean space $\mathbb R^d$ of arbitrary dimension.
We note that $d=1$ throughout our application of their theory in Section~\ref{low-sec}.

Let $W\subset \mathbb{R}^d$ be an open set and let $\phi\colon W\rightarrow \phi(W)$ be a diffeomorphism. We say $\phi$ is {\it conformal} if for any $x\in W$ the differential $D\phi(x)\colon\mathbb{R}^d\rightarrow \mathbb{R}^d$ is a similarity linear map: 
$D\phi(x)=c_x\cdot M_x$ where $c_x>0$ is a scaling factor at $x$ and $M_x$ is a $d\times d$ orthogonal matrix. For a conformal map $\phi\colon W\to\phi(W)$
and a set $A\subset W$, we set
\[\Vert D\phi\Vert_A=\sup\{|D\phi(x)|\colon x\in A\},\]
where $|D\phi(x)|$ denotes the scaling factor of $\phi$ at $x$. 



For each $n\in \mathbb N$ let $I^{(n)}$
be a finite set. We introduce index sets
\begin{equation}\label{index-set}I^{\infty}=\prod_{j=1}^{\infty}I^{(j)}, \text{ and } I^k_n=\prod_{j=n}^k I^{(j)} \text{ for }k\geq n. 
\end{equation}
For each $n\in\mathbb N$
let $\{\phi_i^{(n)}\}_{i\in I^{(n)}}$ be a finite collection 
of self maps of a connected compact set $X\subset \mathbb{R}^d$
such that
 the closure of its interior $X^\circ$
coincides $X$. 
We assume $X$ is a convex set, or its boundary $\partial X$ is smooth. 
For $\omega=\omega_1\omega_2\cdots\in I^\infty$ and
 $n, k\in \mathbb{N}$ with $n\leq k$, we set \begin{equation}\label{indexII}\omega|_n^{k}=\omega_n\cdots \omega_{k}\in I_n^{k}\ \text{ and }\
\phi_{\omega|_n^k}=\phi_{\omega_n}^{(n)}\circ\cdots\circ \phi_{\omega_{k}}^{(k)}.\end{equation}

 A {\it non-autonomous conformal IFS} on $X$ is a sequence 
$\Phi=(\Phi^{(n)})_{n=1}^{\infty}$,
$\Phi^{(n)}=\{\phi_i^{(n)}\}_{i\in I^{(n)}}$
of collections of conformal self maps of $X$ 
which satisfies the following four conditions:
\begin{itemize}
\item[(A1)] (Open set condition) For all $n\in\mathbb N$ and all distinct indices $i,j\in I^{(n)},$ 
\[
\phi_i^{(n)}(X^{\circ})\cap \phi_j^{(n)}(X^{\circ})=\emptyset.
\]

\item[(A2)] (Conformality) There exists a connected open set $\tilde X$ of $\mathbb R^d$ containing $X$ 
such that each $\phi_i^{(n)}$ extends to a $C^{1}$ conformal diffeomorphism $\tilde{\phi}_i^{(n)}\colon\tilde X\to\tilde{\phi}_i^{(n)}(\tilde X)\subset\tilde X$.
\item[(A3)] (Bounded distortion) 
There exists $C\ge 1$ such that for all $\omega\in I^\infty$ and all $n$, $k\in \mathbb{N}$ with $n\leq k$, 
\[
|D\tilde\phi_{\omega|_n^k}(x_1)|\le C|D\tilde\phi_{\omega|_n^k}(x_2)|\ \text{ for all }x_1,x_2\in \tilde X.
\]

\item[(A4)] (Uniform contraction) 
There are constants $0< \gamma <1$ and $L\geq1$ such that for all $\omega\in I^{\infty}$ and all $n,k\in\mathbb N$ with
$k-n\geq L$, 
\[
\|D\phi_{\omega|_n^k}\|_{X}\le\gamma^{k-n+1}.
\]
\end{itemize}

If $\Phi=(\Phi^{(n)})_{n=1}^{\infty}$ is a non-autonomous conformal IFS,
then (A4) ensures that the set $\bigcap_{n=1}^{\infty}\phi_{\omega|_{1}^n}(X)$ is a singleton for each $\omega\in I^{\infty}$.
We define an
 address map $\Pi \colon I^{\infty} \to X$ by
\[\Pi(\omega)\in \bigcap_{n=1}^{\infty}\phi_{\omega|_{1}^n}(X),\]
and the limit set of $\Phi$ by
 \[
\Lambda(\Phi)=\Pi(I^{\infty}).
\]

In order to determine the Hausdorff dimension of the limit set, for $s\geq0$ 
we introduce a partition function
\[
Z_n^{\Phi}(s)=\sum_{\omega\in I_{1}^n}(\Vert D\phi_{\omega}\Vert_{X})^s,
\]
and
a lower pressure function
$\underline{P}^\Phi\colon[0,\infty)\to [-\infty,\infty]$ of $\Phi$ by
\[\underline{P}^{\Phi}(s)=\liminf\limits_{n\rightarrow \infty}\frac{1}{n}\log Z_n^{\Phi}(s).\]
The lower pressure function has the following monotonicity
\cite[Lemma~2.6]{RU}:
if $0\leq s_1<s_2$ then $\underline{P}^\Phi(s_1)=\underline{P}^\Phi(s_2)=\infty$ or
$\underline{P}^\Phi(s_1)=\underline{P}^\Phi(s_2)=-\infty$ or $\underline{P}^\Phi(s_1)>\underline{P}^\Phi(s_2)$. So, one can  define a critical value
\[s(\Phi)={\rm sup}\{s\ge0\colon  \underline{P}^\Phi(s)>0\}={\rm inf}\{s\ge0\colon \underline{P}^\Phi(s)<0\},
\]
called the Bowen dimension. Note that  (A1) and (A3) together imply $s(\Phi)\le d$.
We say the non-autonomous conformal IFS $\Phi$ is {\it subexponentially bounded} if
\[
\lim_{n \to \infty}\frac{1}{n}\log \# I^{(n)}=0.
\]
We will use the following result in Section~\ref{low-sec}.

\begin{thm}[Bowen's formula, {\cite[Theorem~1.1]{RU}}]
\label{Bowen}
Let $\Phi$ be a non-autonomous conformal IFS that is subexponentially bounded.
Then  
\[\dim_{\rm H}\Lambda(\Phi)=s(\Phi).
\]
\end{thm}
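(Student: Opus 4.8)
We do not prove this result; it is \cite[Theorem~1.1]{RU}. For orientation we sketch the argument one would give, which follows the classical two-sided template for Bowen-type formulas, the new feature being the absence of a shift map and hence of an invariant Gibbs measure to lean on.

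The upper bound $\dim_{\rm H}\Lambda(\Phi)\le s(\Phi)$ is the soft direction and uses neither (A1) nor subexponential boundedness. Fix $s>s(\Phi)$; then $\underline{P}^{\Phi}(s)<0$, so $Z_n^{\Phi}(s)\le e^{-\delta n}$ for infinitely many $n$ and some $\delta>0$. The family $\{\phi_{\omega}(X):\omega\in I_1^n\}$ covers $\Lambda(\Phi)$, and by the bounded distortion bound (A3) together with convexity or smoothness of $X$ one has $\mathrm{diam}(\phi_{\omega}(X))\le C\,\mathrm{diam}(X)\,\|D\phi_{\omega}\|_X$, which tends to $0$ uniformly in $\omega\in I_1^n$ as $n\to\infty$ by the uniform contraction (A4). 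Hence $\mathcal{H}^s(\Lambda(\Phi))\le (C\,\mathrm{diam}(X))^s\liminf_{n\to\infty}Z_n^{\Phi}(s)=0$, so $\dim_{\rm H}\Lambda(\Phi)\le s$, and letting $s\downarrow s(\Phi)$ finishes this half.

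The lower bound $\dim_{\rm H}\Lambda(\Phi)\ge s(\Phi)$ is where all four axioms and subexponential boundedness enter. Fix $0\le s<s(\Phi)$; by Frostman's mass distribution principle it suffices to build a Borel probability measure $\mu$ on $\Lambda(\Phi)$ with $\mu(B(x,r))\le M r^s$ for every $x\in\Lambda(\Phi)$ and every small $r$. The natural route is to reduce first to finite subsystems: for each $q$ retain in each generation $n$ only a subset $I_q^{(n)}\subseteq I^{(n)}$ with $\#I_q^{(n)}\le q$ carrying the dominant part of $Z_n^{\Phi}(s)$, obtaining a non-autonomous conformal IFS $\Phi_q$ with \emph{uniformly bounded} index sets. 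For such $\Phi_q$ one can estimate the candidate measure — a weak-$*$ subsequential limit of the measures assigning mass $\|D\phi_{\omega}\|_X^s/Z_n^{\Phi_q}(s)$ to $\phi_{\omega}(X)$ over the generation-$n$ index set of $\Phi_q$ — directly: submultiplicativity of the conformal derivative norms together with (A3) make the partition functions factor across any level, giving $\mu(\phi_{\omega}(X))\lesssim\|D\phi_{\omega}\|_X^s$ for $\omega$ of generation $m$, and then (A1), (A4) and the geometric regularity of $X$ (a bounded-overlap argument relating generation to scale) turn this into the ball estimate, so $\dim_{\rm H}\Lambda(\Phi_q)\ge s$. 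Subexponential boundedness finally serves to show $s(\Phi_q)\to s(\Phi)$ as $q\to\infty$: discarding all but the $q$ heaviest symbols in each generation costs only subexponentially, which does not affect the exponential growth rate of the partition functions.

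The hard part will be the lower bound, specifically keeping the estimates on $\mu$ uniform across levels without any shift-invariance and passing from cylinder estimates to ball estimates without self-similarity — this is the technical core of \cite{RU}. Here the theorem is used as stated, so the only thing to check for the non-autonomous IFSs constructed in Section~\ref{low-sec} is that they satisfy (A1)--(A4) and are subexponentially bounded.
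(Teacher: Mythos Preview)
Your proposal is correct and matches the paper's treatment: the paper does not prove this theorem either, simply citing it as \cite[Theorem~1.1]{RU} and using it as a black box in Section~\ref{low-sec}. Your orientation sketch goes beyond what the paper provides and is a reasonable outline of the argument in \cite{RU}, but for the purposes of this paper only the citation is needed.
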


\subsection{An extendibility and bounded distortion}
\label{BD}
Below we show that each map in the infinite IFS 
$\{\varphi_{1,i}\}_{i=1}^\infty\cup\{\varphi_{-1,i}\}_{i=2}^\infty$ admits a $C^2$ conformal extension to the interval
\[\tilde X=\left(-\frac{1}{5}, \frac{5}{4}\right)\supset [0,1].\] 
\begin{lem}
\label{conf}
\
\begin{itemize}
\item[(a)] For each $i\geq1$, $\varphi_{1, i}$ and
$\varphi_{-1, i+1}$ extend to $C^{2}$ diffeomorphisms  $\tilde\varphi_{1,i}\colon\tilde X\to \tilde\varphi_{1,i}(\tilde X)\subset\tilde X$ and
$\tilde\varphi_{-1,i+1}\colon\tilde X\to \tilde\varphi_{-1,i+1}(\tilde X)\subset\tilde X$
respectively.
\item[(b)] For all $i\geq3$ we have
 \[\Vert D\tilde\varphi_{1, i}\Vert_{\tilde X}\le \frac{1}{2}\ \text{ and }\ \Vert D\tilde\varphi_{-1,i}\Vert_{\tilde X}\le \frac{1}{2}.\]
 \end{itemize}
\end{lem}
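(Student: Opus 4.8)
The plan is to treat every generator as a M\"obius transformation and to read everything off from one-variable calculus. The maps $\varphi_{1,i}(x)=(i+x)^{-1}$ and $\varphi_{-1,i}(x)=(i-x)^{-1}$ are real-analytic, in particular $C^{2}$, away from their single pole, and on the line a diffeomorphism is automatically conformal with scaling factor $|D\phi(x)|=|\phi'(x)|$. So the first step is to check that the poles avoid $\tilde X=(-\tfrac15,\tfrac54)$: the pole of $\varphi_{1,i}$ is $x=-i$ and that of $\varphi_{-1,i}$ is $x=i$; for $i\ge1$ one has $-i\le-1<-\tfrac15$, and for $i\ge2$ one has $i\ge2>\tfrac54$, so in each case the pole lies outside the closure of $\tilde X$. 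Hence each $\varphi_{1,i}$, $i\ge1$, and each $\varphi_{-1,i}$, $i\ge2$, restricts to a $C^{\infty}$ map on a neighbourhood of $\tilde X$; since $\varphi_{1,i}'(x)=-(i+x)^{-2}$ and $\varphi_{-1,i}'(x)=(i-x)^{-2}$ are nowhere zero there, each restriction $\tilde\varphi_{\sigma,i}$ is a diffeomorphism onto its image, which gives the $C^{2}$ conformal extensions asserted in~(a).

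For the inclusion $\tilde\varphi_{\sigma,i}(\tilde X)\subset\tilde X$ I would use monotonicity: $\tilde\varphi_{1,i}$ is decreasing and $\tilde\varphi_{-1,i}$ increasing on $\tilde X$, so the image is the open interval bounded by the values at the endpoints $x=-\tfrac15$ and $x=\tfrac54$, i.e.\ the analogue over $\tilde X$ of \eqref{normal},
\[
\tilde\varphi_{1,i}(\tilde X)=\Bigl(\tfrac{1}{i+5/4},\ \tfrac{1}{i-1/5}\Bigr)\quad\text{and}\quad
\tilde\varphi_{-1,i}(\tilde X)=\Bigl(\tfrac{1}{i+1/5},\ \tfrac{1}{i-5/4}\Bigr).
\]
The left endpoints are positive, hence certainly exceed $-\tfrac15$, so it remains only to bound the right endpoint by $\tfrac54$. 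For $\varphi_{1,i}$, $i\ge1$, this is $\tfrac{1}{i-1/5}\le\tfrac{1}{1-1/5}=\tfrac54$, and since the value $\tfrac54$ is only approached as $x\to-\tfrac15$ and never attained, the open image lies strictly inside $\tilde X$; this is exactly where the right endpoint $\tfrac54=1/(4/5)$ in the definition of $\tilde X$ comes from. For $\varphi_{-1,i}$ the right endpoint is $\tfrac{1}{i-5/4}$, which is $\le\tfrac47$ once $i\ge3$, comfortably inside $\tilde X$.

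Part~(b) follows from the same monotonicity in a single line: $\Vert D\tilde\varphi_{1,i}\Vert_{\tilde X}=\sup_{x\in\tilde X}(i+x)^{-2}=(i-\tfrac15)^{-2}$ and $\Vert D\tilde\varphi_{-1,i}\Vert_{\tilde X}=\sup_{x\in\tilde X}(i-x)^{-2}=(i-\tfrac54)^{-2}$, each supremum being the limiting value at the endpoint of $\tilde X$ nearest the pole. For $i\ge3$ these are at most $(\tfrac{14}{5})^{-2}=\tfrac{25}{196}$ and $(\tfrac74)^{-2}=\tfrac{16}{49}$, both well below $\tfrac12$.

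I do not expect a genuine obstacle here: the lemma unwinds to elementary inequalities for M\"obius generators. The only delicate point concerns the slowest generators, where the estimates become tight or even fail: $\varphi_{1,1}$, whose image in~(a) just touches the right end of $\tilde X$, and especially $\varphi_{-1,2}$, which has a parabolic fixed point at $x=1$ (one checks $\varphi_{-1,2}(1)=1$ and $\varphi_{-1,2}'(1)=1$) and is therefore not a $\tfrac12$-contraction on $\tilde X$. This is why the bound in~(b) is restricted to $i\ge3$, and it is the generator one must watch --- or simply leave out --- when assembling the non-autonomous conformal IFS of Section~\ref{low-sec}.
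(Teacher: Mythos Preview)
Your computation follows the same route as the paper: write down the images of $\tilde X$ using monotonicity and check the endpoint inequalities. Part~(b) is fine. However, there is a genuine gap in part~(a): you verify the inclusion $\tilde\varphi_{-1,i}(\tilde X)\subset\tilde X$ only for $i\ge3$, but the statement asserts it for all $i\ge2$, and for $i=2$ the natural M\"obius extension $x\mapsto 1/(2-x)$ does \emph{not} map $\tilde X$ into itself. Indeed, as you yourself compute, the right endpoint of the image is $1/(2-5/4)=4/3>5/4$, so $\tilde\varphi_{-1,2}(\tilde X)=(5/11,4/3)\not\subset\tilde X$. Your closing paragraph notices that $\varphi_{-1,2}$ is parabolic but draws only the consequence for~(b); the failure of the self-mapping property in~(a) is a separate issue, and it is not resolved by ``leaving the map out'', since the lemma claims an extension exists.

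The paper handles this by abandoning the M\"obius formula on the right portion of $\tilde X$: it defines $\tilde\varphi_{-1,2}$ to equal $1/(2-x)$ on $(-1/5,1]$ (so it still extends $\varphi_{-1,2}|_{[0,1]}$), but replaces it on $(1,5/4)$ by any $C^{2}$ continuation that remains a diffeomorphism onto its image and satisfies $\tilde\varphi_{-1,2}(5/4)<5/4$. Since $\tilde\varphi_{-1,2}(1)=1$, this forces the whole image into $\tilde X$. You should add this modification (or an equivalent one) to complete the proof of~(a).
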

\begin{proof}
We set 
$\tilde\varphi_{1, i}(x)=1/(i+x),\ x\in \tilde X$ for $i\geq1$ and $\tilde\varphi_{-1, i}(x)=1/(i-x),\ x\in\tilde X$ for $i\geq3$.
Then $\tilde\varphi_{1,i}$ and
$\tilde\varphi_{-1,i}$ are $C^2$ diffeomorphisms onto their images and satisfy \[\tilde{\varphi}_{1, i}(\tilde X)=\left(\frac{1}{i+5/4}, \frac{1}{i-1/{5}}\right)\subset \tilde X\text{ and }\]  \[\tilde{\varphi}_{-1, i}(\tilde X)=\left(\frac{1}{i+1/{5}}, \frac{1}{i-5/4}\right)\subset \tilde X.\]

The remaining map $\varphi_{-1,2}$ needs a different treatment since it has $1$  as a neutral fixed point. 
Let $\tilde\varphi_{-1,2}\colon \tilde X\to\mathbb R$
be a $C^2$ diffeomorphism onto its image
such that 
 $\tilde\varphi_{-1,2}(x)=1/(2-x)$
for $x\in(-1/{5},1]$ and $\tilde\varphi_{-1,2}(5/4)<5/4$. 
 Then $\tilde\varphi_{-1,2}(\tilde X)\subset \tilde X$ holds. We have verified (a). Item (b) follows from direct calculations.
\end{proof}

For the rest of this paper, we denote the $C^2$ conformal extensions $\tilde\varphi_{1, i}$, $\tilde \varphi_{-1, i}$
in Lemma~\ref{conf} by $\varphi_{1, i}$, $\varphi_{-1, i}$ for ease of notation. 
  We now establish a bounded distortion property
  for these extensions.
For $L>1$ put $\mathbb N_{\geq L}=\{n\in\mathbb N\colon n\geq L\}$.


\begin{lem}\label{BDP}
There exists $C\ge1$ such that for all
$\sigma\in\{-1,1\}^\mathbb N$, 
 all $a_1a_2\cdots\in ({\mathbb N_{\geq3}})^\mathbb N,$ all $k\in \mathbb{N}$ and for  all $x, y\in \tilde X$ we have \[|D(\varphi_{\sigma_1, a_1}\circ \cdots \circ \varphi_{\sigma_k, a_k})(x)|\le C|D(\varphi_{\sigma_1, a_1}\circ \cdots \circ \varphi_{\sigma_k, a_k})(y)|.\]
\end{lem}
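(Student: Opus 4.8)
The plan is to prove the bounded distortion property via the standard telescoping argument for compositions of conformal maps on the real line, controlling the product of logarithmic derivatives. First I would introduce notation: for $1\le j\le k$ write $\psi_j=\varphi_{\sigma_j,a_j}$ and $\Psi_j=\psi_j\circ\cdots\circ\psi_k$, so that $\Psi_1=\varphi_{\sigma_1,a_1}\circ\cdots\circ\varphi_{\sigma_k,a_k}$ and $\Psi_k=\psi_k$. By the chain rule, for any $x\in\tilde X$,
\[
\log|D\Psi_1(x)|=\sum_{j=1}^{k}\log|D\psi_j(\Psi_{j+1}(x))|,
\]
with the convention $\Psi_{k+1}=\mathrm{id}$. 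Hence for $x,y\in\tilde X$,
\[
\bigl|\log|D\Psi_1(x)|-\log|D\Psi_1(y)|\bigr|\le\sum_{j=1}^{k}\bigl|\log|D\psi_j(\Psi_{j+1}(x))|-\log|D\psi_j(\Psi_{j+1}(y))|\bigr|.
\]
Since each $\psi_j\in\{\varphi_{1,i}\}_{i\ge3}\cup\{\varphi_{-1,i}\}_{i\ge3}$ is $C^2$, by the mean value theorem the $j$-th summand is bounded by $\sup_{\tilde X}\bigl|(\log|D\psi_j|)'\bigr|\cdot|\Psi_{j+1}(x)-\Psi_{j+1}(y)|$.

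Next I would estimate the two factors. For the first: $\varphi_{1,i}(x)=1/(i+x)$ gives $(\log|D\varphi_{1,i}|)'(x)=-2/(i+x)$, and on $\tilde X=(-1/5,5/4)$ with $i\ge3$ we have $|i+x|\ge 3-1/5$, so $\sup_{\tilde X}|(\log|D\varphi_{1,i}|)'|\le 2/(3-1/5)$; the analogous bound holds for $\varphi_{-1,i}$, $i\ge3$, where $(\log|D\varphi_{-1,i}|)'(x)=2/(i-x)$ and $|i-x|\ge 3-5/4$. Thus there is a uniform constant $M$ with $\sup_{\tilde X}|(\log|D\psi_j|)'|\le M$ for every admissible $j$. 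For the second factor I use Lemma~\ref{conf}(b): each of $\varphi_{1,i}$, $\varphi_{-1,i}$ with $i\ge3$ has $\|D\cdot\|_{\tilde X}\le 1/2$, so $\Psi_{j+1}=\psi_{j+1}\circ\cdots\circ\psi_k$ is a contraction with $\|D\Psi_{j+1}\|_{\tilde X}\le 2^{-(k-j)}$, whence $|\Psi_{j+1}(x)-\Psi_{j+1}(y)|\le 2^{-(k-j)}|x-y|\le 2^{-(k-j)}|\tilde X|$, using that $\tilde X$ is an interval of finite length. Plugging in,
\[
\bigl|\log|D\Psi_1(x)|-\log|D\Psi_1(y)|\bigr|\le M|\tilde X|\sum_{j=1}^{k}2^{-(k-j)}\le 2M|\tilde X|,
\]
a bound independent of $\sigma$, of the digits $a_1,a_2,\ldots\in\mathbb N_{\ge3}$, of $k$, and of $x,y$. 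Exponentiating and setting $C=\exp(2M|\tilde X|)$ yields the claim.

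I do not expect a serious obstacle here: the key point is simply that restricting digits to $\mathbb N_{\ge3}$ (i) keeps every map uniformly away from the neutral fixed point $1$ of $\varphi_{-1,2}$, so that the logarithmic derivatives stay uniformly bounded on $\tilde X$, and (ii) makes each map a $1/2$-contraction by Lemma~\ref{conf}(b), which gives the geometric decay needed to sum the telescoped errors. The only mild care needed is to make sure the estimates $|i+x|\ge 3-1/5$ and $|i-x|\ge 3-5/4>0$ genuinely hold for all $x\in\tilde X$ and all $i\ge3$ — this is why the hypothesis is $a_j\ge3$ rather than $a_j\ge2$ — and to record that $\tilde X$ has finite diameter so that $|x-y|$ is bounded; both are immediate. (This lemma will later supply condition (A3) for the non-autonomous IFSs built from digits in $B\cap\mathbb N_{\ge3}$; the finitely many small digits are handled separately.)
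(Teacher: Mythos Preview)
Your proof is correct and essentially identical to the paper's: the paper also telescopes $\log|D\Psi_1(x)|-\log|D\Psi_1(y)|$ via the chain rule, bounds $\|D\log|D\varphi_{\pm1,i}|\|_{\tilde X}$ uniformly by a constant $C_0$ for $i\ge3$, uses Lemma~\ref{conf}(b) to obtain the $2^{-(k-j)}$ geometric decay, sums to $2C_0|\tilde X|$, and sets $C=\exp(2C_0|\tilde X|)$. The only difference is cosmetic---you spell out the ``direct calculation'' of $(\log|D\varphi_{\pm1,i}|)'$ that the paper merely asserts.
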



\begin{proof}
By Lemma~\ref{conf}(b),
for $\sigma\in \{-1, 1\}^\mathbb N$,  $a_1a_2\cdots\in (\mathbb N_{\geq3})^\mathbb N$, $k\in \mathbb{N}$,
$1\leq j\leq k$ and
for $x,y\in \tilde X$ we have
\[|\varphi_{\sigma_j, a_j}\circ \cdots \circ \varphi_{\sigma_k, a_k}(x)-\varphi_{\sigma_j, a_j}\circ \cdots \circ \varphi_{\sigma_k, a_k}(y)|\leq\left(\frac{1}{2}\right)^{k-j}|\tilde X|.\]
A direct calculation shows that there is a constant $C_0>0$ such that 
$\Vert D\log|D\varphi_{1,i}|\Vert\leq C_0$ and $\Vert  D\log|D\varphi_{-1,i}|\Vert\leq C_0$
 for all $i\in
{\mathbb N_{\geq3}}$.
Therefore we obtain

\[\begin{split}
\log\frac{|D(\varphi_{\sigma_1, a_1}\circ \cdots \circ \varphi_{\sigma_k, a_k})(x)|}{|D(\varphi_{\sigma_1, a_1}\circ \cdots \circ \varphi_{\sigma_k, a_k})(y)|}&\leq 
\sum_{j=1}^kC_0\left(\frac{1}{2}\right)^{k-j}|\tilde X|\le 2C_0|\tilde X|.
\end{split}\]
The desired inequality holds
if we set $C=\exp(2C_0|\tilde X|)$.
\end{proof}

\section{The Hausdorff dimension}
Using the preliminary results in Section~2, we prove main lower and upper bounds of Hausdorff dimension
 in Sections~\ref{low-sec} and 
 \ref{up-sec} respectively.
  In Section~\ref{pfthms} we combine both bounds and complete the proofs of the Main Theorem.

\subsection{The lower bound}\label{low-sec}
Using the theory of non-autonomous conformal IFSs summarized in Section~\ref{ncifs}, 
we prove
the following lower bound of Hausdorff dimension.
\begin{pro}
\label{lower}
Let $\sigma\in \{-1, 1\}^{\mathbb{N}},$ let $B$ be an infinite subset of $\mathbb{N}$ and let $f\colon\mathbb{N}\rightarrow [\min B, \infty)$ satisfy $\lim_{n\to \infty}f(n)=\infty.$ 
Then \[\dim_{\rm H}G_{\sigma}(B, f)\ge \frac{\tau(B)}{2}.\]
\end{pro}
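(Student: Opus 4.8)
The plan is to reduce to proving, for each fixed $s$ with $0<s<\tau(B)/2$ (the case $\tau(B)=0$ being trivial since Hausdorff dimension is nonnegative), the bound $\dim_{\rm H}G_\sigma(B,f)\ge s$; this I would do by exhibiting inside $G_\sigma(B,f)$ the bi-Lipschitz image of the limit set of a subexponentially bounded non-autonomous conformal IFS $\Phi$, built from the maps $\varphi_{\sigma_n,i}$, with Bowen dimension $s(\Phi)\ge s$, and then invoking Theorem~\ref{Bowen}. First I would normalize: since $\tau(\cdot)$ is unaffected by deleting finitely many integers and since $f(n)\to\infty$, I fix a finite admissible initial string $a_1,\dots,a_m\in B$ with $a_k\le f(k)$, chosen large enough that $f(k)\ge\min(B\cap\mathbb N_{\ge3})$ for all $k>m$, and pass to the shifted data $\sigma'=\sigma_{m+1}\sigma_{m+2}\cdots$, $B'=B\cap\mathbb N_{\ge3}$, $f'(n)=f(m+n)$; then $\tau(B')=\tau(B)$ and $f'(n)\ge\min B'$ for every $n$. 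The map $\psi=\varphi_{\sigma_1,a_1}\circ\cdots\circ\varphi_{\sigma_m,a_m}$ is a $C^2$ diffeomorphism on $\tilde X$ by Lemma~\ref{conf}, hence bi-Lipschitz on $[0,1]$, so $\dim_{\rm H}\psi(A)=\dim_{\rm H}A$ for every $A\subseteq[0,1]$; and since all digits produced below will be $\ge3$, Lemma~\ref{IFS} guarantees that $\psi$ carries the limit set constructed below into $G_\sigma(B,f)$. So I may and do assume henceforth that $B\subseteq\mathbb N_{\ge3}$ and $\min B\le f(n)$ for all $n$, so that every map $\varphi_{\sigma_n,i}$ with $i\in B$ satisfies the extendibility, the contraction $\Vert D\varphi_{\sigma_n,i}\Vert_{\tilde X}\le\tfrac12$, and the bounded-distortion estimate of Lemmas~\ref{conf} and \ref{BDP}.

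The heart of the proof is the choice of the stage alphabets $I^{(n)}\subseteq B$. Since $2s<\tau(B)$ we have $\sum_{i\in B}i^{-2s}=\infty$, so I would list $B$ increasingly and split it into consecutive finite blocks $\beta_1,\beta_2,\dots$ with $\sum_{i\in\beta_m}i^{-2s}\ge2$ for each $m$; writing $M_m=\max\beta_m$, the sequence $(M_m)$ increases to $\infty$ and $\min\beta_m\to\infty$. For each $n$ I set $I^{(n)}=\beta_{m(n)}$ with $m(n)=\min\{\bar m(n),\rho(n)\}$, where $\bar m(n)=\max\{m:M_m\le f(n)\}$ and $\rho(n)=\max\{m:M_m\le n\}$ (for the finitely many $n$ for which $\bar m(n)$ or $\rho(n)$ is undefined I take $I^{(n)}=\{\min B\}$). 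Because $f(n)\to\infty$ and $M_m\to\infty$, both $\bar m(n)$ and $\rho(n)$, and hence $m(n)$, tend to $\infty$; therefore $\min I^{(n)}\to\infty$, while $\max I^{(n)}=M_{m(n)}\le f(n)$ and $M_{m(n)}\le n$. The last inequality yields $\#I^{(n)}\le M_{m(n)}\le n$, so $\Phi=(\{\varphi_{\sigma_n,i}\}_{i\in I^{(n)}})_{n\ge1}$ is subexponentially bounded. I would then check its four defining conditions on $X=[0,1]$ directly from Section~2: (A1) from the explicit fundamental intervals in \eqref{normal}, noting that at each stage $n$ all maps carry the common sign $\sigma_n$ and thus have images with pairwise disjoint interiors; (A2) and (A3) from Lemmas~\ref{conf} and \ref{BDP}; and (A4) with $\gamma=\tfrac12$, $L=1$ from $\Vert D\varphi_{\sigma_n,i}\Vert_{\tilde X}\le\tfrac12$.

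Next I would bound the partition function from below. For a word $\omega\in I_1^n$, the chain rule together with the elementary estimate $|\varphi_{\sigma,i}'(z)|=(i+\sigma z)^{-2}\ge(\tfrac43 i)^{-2}=\tfrac{9}{16}i^{-2}$ (valid for $z\in[0,1]$, $i\ge3$) gives $\Vert D\phi_\omega\Vert_X\ge|D\phi_\omega(0)|\ge(\tfrac{9}{16})^n\prod_{j=1}^n\omega_j^{-2}$, hence
\[
Z_n^{\Phi}(s)\ \ge\ \Big(\tfrac{9}{16}\Big)^{ns}\prod_{j=1}^n\Big(\sum_{i\in I^{(j)}}i^{-2s}\Big)\ \ge\ \Big(\tfrac{9}{16}\Big)^{ns}\,c_0\,2^{\,n},
\]
with a fixed constant $c_0>0$, since all but finitely many of the factors $\sum_{i\in I^{(j)}}i^{-2s}$ are $\ge2$ while the remaining finitely many are at least $(\min B)^{-2s}>0$. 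Consequently $\underline{P}^{\Phi}(s)\ge s\log\tfrac{9}{16}+\log2>0$, the positivity because $0<s\le\tfrac12$ forces $s\log\tfrac{16}{9}\le\log\tfrac43<\log2$. Therefore $s(\Phi)\ge s$, Bowen's formula (Theorem~\ref{Bowen}) gives $\dim_{\rm H}\Lambda(\Phi)=s(\Phi)\ge s$, and with $\psi(\Lambda(\Phi))\subseteq G_\sigma(B,f)$ this yields $\dim_{\rm H}G_\sigma(B,f)\ge s$; letting $s\uparrow\tau(B)/2$ would finish the proof.

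The step I expect to be the main obstacle is precisely the design of the alphabets $I^{(n)}$: one must simultaneously force $\min I^{(n)}\to\infty$ and $\max I^{(n)}\le f(n)$ however slowly $f$ grows, keep $\sum_{i\in I^{(n)}}i^{-2s}$ bounded away from $0$ so the pressure stays positive, and keep $\#I^{(n)}$ subexponential so the Rempe-Gillen--Urba\'nski form of Bowen's formula applies. These four requirements pull against each other, and it is exactly the flexibility of the non-autonomous setting — an arbitrary finite alphabet at each stage, with no stationarity or ergodic-averaging constraint — that lets the slowly growing cut-offs $M_{m(n)}$ reconcile them; this is the flexibility that the ergodic construction of \cite{T} loses once $\sigma$ stops being eventually constant. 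The remaining ingredients (verifying (A1)--(A4), the bi-Lipschitz invariance $\dim_{\rm H}\psi(\Lambda(\Phi))=\dim_{\rm H}\Lambda(\Phi)$, and that the shifted digits being $\ge3$ force irrationality and hence $\psi(\Lambda(\Phi))\subseteq G_\sigma(B,f)$ via Lemma~\ref{IFS}) are routine.
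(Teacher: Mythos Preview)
Your proof is correct and follows essentially the same route as the paper: build a subexponentially bounded non-autonomous conformal IFS on $[0,1]$ from finite blocks of $B$ with controlled sums $\sum_{i}i^{-2s}$, verify (A1)--(A4) via Lemmas~\ref{conf} and \ref{BDP}, show the limit set lies in $G_\sigma(B,f)$, and invoke Theorem~\ref{Bowen}. The differences are only in bookkeeping---you normalize by a bi-Lipschitz prefix and use the explicit pointwise bound $|\varphi_{\sigma,i}'|\ge(9/16)i^{-2}$ (together with blocks satisfying $\sum i^{-2s}\ge 2$) to obtain $\underline{P}^\Phi(s)>0$ in one stroke, whereas the paper assigns each block to $t_m$ consecutive stages and absorbs the distortion constant of Lemma~\ref{BDP} through an auxiliary parameter $\delta$ before letting $\delta,\varepsilon\to0$.
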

\begin{proof}
We may assume $\tau(B)>0$, for otherwise there is nothing to prove.
Let $\varepsilon\in (0, \tau(B))$. As in \cite{T}, we define a strictly increasing sequence $\{b_m\}_{m\in \mathbb{N}}$ in $B$ inductively as follows: $b_1=\min B,$ and $b_{m+1} > b_m$ is the minimal in $B$ such that
\begin{equation}
\label{aaaa}
\sum_{k\in B\cap[b_m,b_{m+1})}k^{-\tau(B)+\varepsilon}\ge 1. 
\end{equation}
Set 
\begin{equation}
\label{defbm}
 B_m=
\begin{cases}
 \{\min B\}&\text{for } m=1,\\
B\cap[b_m,b_{m+1})&\text{for }m\ge 2.
 \end{cases}
 \end{equation}
Note that $B=\bigcup_{m=1}^\infty B_m.$
Let $\{t_m\}_{m\in \mathbb{N}}$ be a sequence of positive integers such that for every $m\ge 2$ we have
\begin{equation}
\label{hutou}
b_{m+1}\le \inf\left\{f(n)\colon \sum_{j=1}^{m-1}t_j+1\le n \le  \sum_{j=1}^{m}t_j\right\},
\end{equation}
and
\begin{equation}
\label{hutou2}
\lim_{m\to \infty}\frac{\log \# B_m}{ \sum_{j=1}^{m}t_j}=\lim_{m\to \infty}\frac{\log \# B_m}{ \sum_{j=1}^{m-1}t_j+1}=0.
\end{equation}
Since $\lim_{n\to \infty} f(n) = \infty$, one can choose such a sequence by induction on $m$. 

Put $X=[0,1]$. We now construct a non-autonomous conformal IFS on $X$. 
For each integer $1\leq n\leq t_1$, we set
\[I^{(n)}=B_1.\]
For each 
integer $n\geq t_1+1$ we pick $m\ge 2$ such that $\sum_{j=1}^{m-1}t_j+1\le n \le  \sum_{j=1}^{m}t_j$,
and set \[I^{(n)}=B_m.\] Put 
$I^{\infty}=\prod_{j=1}^{\infty}I^{(j)}$ and
$I^k_n=\prod_{j=n}^k I^{(j)}$
as in \eqref{index-set}. 
For $n\geq t_1+1$ we have \[I_1^n=(B_1)^{t_1}\times \cdots \times (B_{m-1})^{t_{m-1}}\times 
(B_m)^{l_n},\]
where $l_n=n-\sum_{j=1}^{m-1}t_j$ and  
\[I^{\infty}=(B_1)^{t_1}\times (B_2)^{t_2}\times \cdots \times (B_m)^{t_m}\times \cdots.\] 
Let $n\in\mathbb N$. For each $i\in I^{(n)}$ we put
$\phi_i^{(n)}=\varphi_{\sigma_n,i},$
and set
$\Phi^{(n)}=\{\phi^{(n)}_i\}_{i\in I^{(n)}}.$ 
For $\omega=\omega_1\omega_2\cdots\in I^\infty$ and
 $n, k\in \mathbb{N}$ with $n\leq k$, we set $\omega|_n^{k}=\omega_n\cdots \omega_{k}\in I_n^{k}$ and 
$\phi_{\omega|_n^k}=\phi_{\omega_n}^{(n)}\circ\cdots\circ \phi_{\omega_{k}}^{(k)}$
as in \eqref{indexII}.
Finally we set
$\Phi=\{\Phi^{(n)}\}_{n=1}^\infty.$ 
Then (A1) follows from \eqref{normal}, (A2) follows from 
 Lemma~\ref{conf}, and (A3) follows from Lemma~\ref{BDP}.
Since 
$\|D \phi^{(n)}_i\|_X=\|D \varphi_{\sigma_n, i}\|_X\le 1/4$
for all $n\ge t_1+t_2+1$ and all $i\in I^{(n)},$ (A4) holds. Therefore $\Phi$ is a non-autonomous conformal IFS on $X$.






\begin{lem}\label{contained}
The limit set $\Lambda(\Phi)$ of $\Phi$ is contained in $G_{\sigma}(B, f)$.
\end{lem}
\begin{proof}
Let $\Pi$ denote the address map of the non-autonomous conformal IFS $\Phi$.
Let $\omega\in I^{\infty}$. Then 
$a_{\sigma, n}(\Pi(\omega))\in B$ for all $n\geq1$. The first alternative of \eqref{defbm} gives \[a_{\sigma, n}(\Pi(\omega))=\min B\le f(n)\ \mbox{ if }1\leq n\leq t_1.\] For every $m\ge 2$, the second alternative of \eqref{defbm} and \eqref{hutou} yield 
\[b_m\le a_{\sigma, n}(\Pi(\omega))<b_{m+1}\le f(n)\ \mbox{ if } \sum_{j=1}^{m-1}t_j+1\leq n\leq \sum_{j=1}^{m}t_j.\] 
As $n\to\infty$ we have $m\to\infty$, $b_m\to\infty$ and so $a_{\sigma, n}(\Pi(\omega))\to \infty$.
Hence $\Pi(\omega)\in G_{\sigma}(B, f)$ holds.
Since $\omega\in I^\infty$ is arbitrary we obtain $\Lambda(\Phi)\subset G_{\sigma}(B, f)
$. \end{proof}
\begin{lem}\label{subexp}The non-autonomous conformal IFS $\Phi$
is subexponentially bounded.
\end{lem}
\begin{proof}Recall that for each integer $n\geq t_1+1$ we have 
 $\sum_{j=1}^{m-1}t_j+1\le n $.
Then
\[0\leq \frac{1}{n}\log \# I^{(n)}\le \frac{\log \# B_m}{\sum_{j=1}^{m-1}t_j+1}.\]
From this and \eqref{hutou2} we obtain
$\lim_{n\to \infty}(1/n)\log\# I^{(n)}=0$.
\end{proof}

Recall that the lower pressure function $\underline{P}^\Phi\colon[0,\infty)\to[-\infty,\infty]$ 
  is
 given by \[\underline{P}^\Phi(s)=\liminf\limits_{n\rightarrow \infty}\frac{1}{n}\log Z_n^\Phi(s),\text{ where }
 Z_n^\Phi(s)=\sum_{\omega\in I_1^n}(\Vert D\phi_{\omega}\Vert_X)^s.\]
 By Lemmas~\ref{contained}, \ref{subexp}
and Theorem~\ref{Bowen},
the Bowen dimension $s(\Phi)$ satisfies 
\begin{equation}\label{eqp-1}\dim_{\rm H}G_\sigma(B,f)\geq\dim_{\rm H}\Lambda(\Phi)=
s(\Phi).\end{equation}
In order to estimate the Bowen dimension from below, we estimate the lower pressure function from below.
From Lemma~\ref{BDP}, there exists a constant $C>1$ such that for all $n\geq1$, all $i\in I^{(n)}$ and for all $x\in X$ we have
\begin{equation}
\label{control}
|D\phi_{i}^{(n)}(x)| \ge C^{-1}|D\phi_{i}^{(n)}(0)|=C^{-1}\frac{1}{k^2},
\end{equation}
where $k\geq1$ is the integer such that $\phi_i^{(n)}\in\{\varphi_{1,k},\varphi_{-1,k}\}$.
Since $b_m\to\infty$ as $m\to\infty$, for any $\delta>0$ there exists $N\geq1$ such that for all $i\geq b_{N+1}$ we have
\begin{equation}\label{control2}
C^{-1}i^{-2}\geq i^{-2-\delta}.\end{equation}
Using the chain rule,  \eqref{control} and \eqref{control2} 
we have
\[\begin{split}
Z_n^\Phi(s)=&
\sum_{\omega\in I_1^n}(\Vert D(\varphi_{\sigma_1,\omega_1}\circ \varphi_{\sigma_2,\omega_2}\circ\cdots \circ \varphi_{\sigma_n,\omega_n})\Vert_X)^s\\\ge& 
C^{-Ns}\sum_{\omega\in I_1^n}{\omega_1}^{-2s}\cdots
{\omega_{t_1+\cdots +t_N}}^{-2s}{\omega_{t_1+\cdots +t_N+1}}^{-(2+\delta)s}\cdots{\omega_n}^{-(2+\delta)s}
\\=& 
C^{-Ns}(\min B)^{-2st_1}\left(\sum_{k\in B_2}k^{-2s}\right)^{t_2}\cdots \left(\sum_{k\in B_{N}}k^{-2s}\right)^{t_{N}}\\
&\times\left(\sum_{k\in B_{N+1}}k^{-(2+\delta)s}\right)^{t_{N+1}}\cdots\left(\sum_{k\in B_m}k^{-(2+\delta)s}\right)^{l_n}
.
\end{split}\]
Set
$s_{\varepsilon,\delta}=(\tau(B)-\varepsilon)/(2+\delta)$. By \eqref{aaaa}
we have \[\left(\sum_{k\in B_{N+1}}k^{-(2+\delta)s_{\varepsilon,\delta}}\right)^{t_{N+1}}\cdots \left(\sum_{k\in B_m}k^{-(2+\delta)s_{\varepsilon,\delta}}\right)^{l_n}\geq1.\]
Substituting $s=s_{\varepsilon,\delta}$ and combining the above two inequalities yield
\[Z_n^\Phi(s_{\varepsilon,\delta})\ge C^{-Ns_{\varepsilon,\delta}}(\min B)^{-2s_{\varepsilon,\delta}t_1}\left(\sum_{k\in B_2}k^{-2s_{\varepsilon,\delta}}\right)^{t_2}\cdots \left(\sum_{k\in B_{N}}k^{-2s_{\varepsilon,\delta}}\right)^{t_{N}}.\]
It follows that $\underline{P}^\Phi(s_{\varepsilon,\delta})=\liminf_{n\to\infty}(1/n)\log Z_n^\Phi(s_{\varepsilon,\delta})\geq0$, and hence 
\begin{equation}\label{eqp-2}
s(\Phi)\ge \frac{\tau(B)-\varepsilon}{2+\delta}.
\end{equation}
Combining \eqref{eqp-1} and \eqref{eqp-2}, and 
 letting $\delta\to0$ and then $\varepsilon\to0$ yields
the desired inequality 
in Proposition~\ref{lower}.
\end{proof}

\subsection{The upper bound}\label{up-sec}
We slightly modify the argument by Hirst \cite[Corollary~1]{Hir73}
and prove the following upper bound of Hausdorff dimension.
\begin{pro}\label{upper}
Let $\sigma\in \{-1, 1\}^{\mathbb{N}}$ and let $B$ be an infinite subset of $\mathbb N$. We have \[\dim_{\rm H}E_{\sigma}(B)\le \frac{\tau(B)}{2}.\]
\end{pro}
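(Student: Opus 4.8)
The plan is to bound the Hausdorff dimension of $E_\sigma(B)$ by exhibiting an efficient cover of this set by fundamental intervals $[a_1,\dots,a_n]_\sigma$ with all $a_j\in B$, and to control the lengths of these intervals by a bounded distortion argument rather than the explicit convergent formula available only for the RCF. Fix $s>\tau(B)/2$; I would show the $s$-dimensional Hausdorff measure of $E_\sigma(B)$ is zero. The first step is to replace the length $|[a_1,\dots,a_n]_\sigma|$ by a product of derivative norms: since $[a_1,\dots,a_n]_\sigma=\varphi_{\sigma_1,a_1}\circ\cdots\circ\varphi_{\sigma_n,a_n}([0,1])$, by the mean value theorem and bounded distortion (Lemma~\ref{BDP}, after truncating the finitely many $a_j\le 2$, which contribute only a bounded multiplicative constant) there is $C\ge 1$ with
\[
|[a_1,\dots,a_n]_\sigma|\le C\prod_{j=1}^n\|D\varphi_{\sigma_j,a_j}\|_{\tilde X}\le C\prod_{j=1}^n\frac{C'}{a_j^{\,2}},
\]
using $|D\varphi_{\pm1,k}(x)|=1/(k\mp x)^2\asymp k^{-2}$ uniformly on $\tilde X$ for $k\ge 3$; the digits equal to $1$ or $2$ are handled separately since there are at most a bounded number of them along any admissible word and they only inflate the estimate by a constant.

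The second step is the covering argument. Because every $x\in E_\sigma(B)$ has $a_{\sigma,n}(x)\to\infty$, for each $N$ the set $E_\sigma(B)$ is covered by the fundamental intervals $[a_1,\dots,a_N]_\sigma$ with $a_j\in B$ for all $j$ (this family nests and its diameters tend to $0$ by condition (A4)/uniform contraction, so it is an admissible cover for computing $\mathcal H^s$). Hence
\[
\mathcal H^s_\delta\big(E_\sigma(B)\big)\le \sum_{a_1,\dots,a_N\in B}|[a_1,\dots,a_N]_\sigma|^s\le C^s (C')^{sN}\Big(\sum_{k\in B}k^{-2s}\Big)^{N}.
\]
Now fix $s$ with $\tau(B)/2<s$, i.e. $2s>\tau(B)$; by definition of the exponent of convergence $\sum_{k\in B}k^{-2s}<\infty$. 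This alone is not quite enough because the factor $(C')^{sN}$ could swamp the sum, so the refinement is to choose the threshold: pick $s'$ with $\tau(B)/2<s'<s$ and split each digit's contribution — for digits exceeding a large threshold $M$ one uses $k^{-2s}\le M^{-2(s-s')}k^{-2s'}$, and by taking $M$ large the per-symbol sum over $\{k\in B: k>M\}$ becomes smaller than $(C')^{-s'}$; the finitely many digits in $B\cap[1,M]$ appear only boundedly often (since $a_{\sigma,n}\to\infty$), contributing a constant. This forces the geometric product over the remaining $N-O(1)$ coordinates to decay to $0$ as $N\to\infty$, giving $\mathcal H^s(E_\sigma(B))=0$ and therefore $\dim_{\rm H}E_\sigma(B)\le s$; letting $s\downarrow\tau(B)/2$ completes the proof.

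The main obstacle, and the point where this departs from Hirst's original argument, is precisely the absence of the clean RCF identity $|[a_1,\dots,a_n]|=1/(q_n(q_n+q_{n-1}))$ with the recursive control of $q_n$; for a non-eventually-constant $\sigma$ there is no single Gauss-type map and no convergent denominators, so the length estimate must be extracted purely from the composition of the conformal branches. The bounded distortion estimate of Lemma~\ref{BDP} is exactly what rescues this: it converts $|[a_1,\dots,a_n]_\sigma|$ into a product of local derivative norms up to a uniform constant, and then the elementary bound $\|D\varphi_{\pm1,k}\|_{\tilde X}\le C'k^{-2}$ reduces everything to the same series $\sum_{k\in B}k^{-2s}$ that governs $\tau(B)$. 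A secondary technical nuisance is the neutral fixed point of $\varphi_{-1,2}$ and the non-contracting small digits $k\in\{1,2\}$, but since $a_{\sigma,n}(x)\to\infty$ on $E_\sigma(B)$ these occur only for a bounded initial segment of any relevant word and can be absorbed into the constant $C$; one should state this carefully but it presents no real difficulty.
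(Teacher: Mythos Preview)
Your overall strategy matches the paper's: replace the RCF convergent formula by a bounded-distortion estimate $|[a_1,\dots,a_n]_\sigma|\le (C')^n\prod_j a_j^{-2}$, then run a threshold argument so that the per-digit sum $\sum_{k\in B,\,k>M}(C')^s k^{-2s}<1$. The product bound itself is fine and needs no caveat for small digits, since $\|D\varphi_{\pm1,k}\|_{[0,1]}\le 4k^{-2}$ holds for every admissible $k$.

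The gap is your handling of digits $\le M$. You assert that ``the finitely many digits in $B\cap[1,M]$ appear only boundedly often (since $a_{\sigma,n}\to\infty$), contributing a constant'' and can be ``absorbed into the constant $C$''. This is false as a uniform statement: for each fixed $x\in E_\sigma(B)$ the number of indices $n$ with $a_{\sigma,n}(x)\le M$ is finite, but it is \emph{not} bounded independently of $x$. Consequently, in your covering sum $\sum_{(a_1,\dots,a_N)\in B^N}|[a_1,\dots,a_N]_\sigma|^s$ there is no uniform control on how many coordinates fall in $B\cap[1,M]$, and you cannot split off ``$N-O(1)$ coordinates'' as claimed. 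Your argument, as written, does not show the sum tends to $0$.

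The paper repairs exactly this point by a countable decomposition rather than a constant absorption. It first fixes $L\ge 3$ with $(2C)^{(\tau(B)+\varepsilon)/2}\sum_{k\in B\cap\mathbb N_{\ge L}}k^{-(\tau(B)+\varepsilon)}\le 1$ and proves $\dim_{\rm H}E_{\sigma,L}(B)\le(\tau(B)+\varepsilon)/2$ for the subset where \emph{all} digits are $\ge L$. Then it observes that $E_\sigma(B)$ is a countable union of sets $\varphi_{\sigma_1,a_1}\circ\cdots\circ\varphi_{\sigma_{j-1},a_{j-1}}\big(E_{\sigma',L}(B)\big)$ with $\sigma'=\sigma_j\sigma_{j+1}\cdots$, and concludes by bi-Lipschitz invariance and countable stability of Hausdorff dimension. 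Replacing your ``bounded number of small digits'' claim with this decomposition makes your argument correct and essentially identical to the paper's.
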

\begin{proof}
Let $\varepsilon>0$. 
Let $L=L(\varepsilon)\geq3$
be a sufficiently large integer such that
\begin{equation}\label{upper-eq1}(2C)^{\frac{1}{2}(\tau(B)+\varepsilon) }\sum_{k\in B\cap\mathbb N_{\geq L}}\frac{1}{k^{\tau(B)+\varepsilon}}\leq1,\end{equation}
where $C>1$ is the distortion constant in Lemma~\ref{BDP}.
We construct a family of coverings of the set
\[E_{\sigma,L}(B)=\{x\in E_\sigma(B)\colon a_{\sigma,n}(x)\geq L\text{ for all } n\in\mathbb N\},\]
by fundamental intervals 
$[a_1,\ldots,a_n]_\sigma$ satisfying $a_1\cdots a_n\in (B\cap\mathbb N_{\geq L} )^n$.
Lemma~\ref{BDP} gives
\[\begin{split}\frac{|[a_1,\ldots,a_n,a_{n+1}]_\sigma|}{|[a_1,\ldots,a_n]_\sigma|}&\leq C\frac{|(\varphi_{\sigma_1, a_1}\circ \cdots \circ \varphi_{\sigma_n, a_n})^{-1}([a_1,\ldots,a_n,a_{n+1}]_\sigma )|}{|(\varphi_{\sigma_1, a_1}\circ \cdots \circ \varphi_{\sigma_n, a_n})^{-1}([a_1,\ldots,a_n]_\sigma)|}\\
&=C|\varphi_{\sigma_{n+1},a_{n+1}}([0,1])|\leq \max\{1,1-\sigma_{n+1}\}\frac{C}{a_{n+1}^2}\leq\frac{2C}{a_{n+1}^2}.\end{split}\]
Then we have
\[\sum_{a_{n+1}\in B\cap\mathbb N_{\geq L} }\frac{|[a_1,\ldots,a_n,a_{n+1}]_\sigma|^{\frac{1}{2}(\tau(B)+\varepsilon) }}{|[a_1,\ldots,a_n]_\sigma|^{\frac{1}{2}(\tau(B)+\varepsilon) }}\leq (2C)^{\frac{1}{2}(\tau(B)+\varepsilon) }\sum_{a_{n+1}\in B\cap\mathbb N_{\geq L} }\frac{1}{a_{n+1}^{\tau(B)+\varepsilon}},\]
which does not exceed $1$ by \eqref{upper-eq1}.
It follows that 
\[\frac{\sum_{a_1\cdots a_{n+1}\in (B\cap\mathbb N_{\geq L} )^{n+1}}|[a_1,\ldots,a_{n+1}]_\sigma|^{\frac{1}{2}(\tau(B)+\varepsilon) }}{ \sum_{a_1\cdots a_n\in (B\cap\mathbb N_{\geq L} )^n}|[a_1,\ldots,a_{n}]_\sigma|^{\frac{1}{2}(\tau(B)+\varepsilon) }}\leq1,\]
and therefore
\[\sum_{a_1\cdots a_{n+1}\in (B\cap\mathbb N_{\geq L} )^{n+1}}|[a_1,\ldots,a_{n+1}]_\sigma|^{\frac{1}{2}(\tau(B)+\varepsilon) }\leq1.\]
Since $\sup\{|[a_1,\ldots,a_n]_\sigma|\colon a_1\cdots a_{n}\in (B\cap\mathbb N_{\geq L} )^n\}\to0$ as $n\to\infty$, we obtain
$\dim_{\rm H} E_{\sigma,L}(B)\leq(\tau(B)+\varepsilon)/2.$

The set $E_\sigma(B)$ is covered by the images of sets of the form $E_{\sigma',L}(B)$, $\sigma'=\sigma_j\sigma_{j+1}\cdots\in\{-1,1\}^\mathbb N$, $j\geq1$ under the infinite IFS $\{\varphi_{1,i}\}_{i=1}^\infty\cup\{\varphi_{-1,i}\}_{i=2}^\infty$.
Since $\varphi_{1,i}$, $\varphi_{-1,i}$ are $C^1$ diffeomorphisms and Hausdorff dimensions do not change under the action of bi-Lipschitz homeomorphisms, we have
$\dim_{\rm H} E_\sigma(B)\leq(\tau(B)+\varepsilon)/2$. Since $\varepsilon>0$ is arbitrary we obtain the desired inequality.
\end{proof}

\subsection{Proof of the Main Theorem}\label{pfthms}
Let $B$ be an infinite subset of $\mathbb N$ and let $f\colon\mathbb N\to[\min B,\infty)$ satisfy $\lim_{n\to\infty}f(n)=\infty$.
Since $E_\sigma(B)\supset G_\sigma(B,f)$ holds for all $\sigma\in\{-1,1\}^{\mathbb N}$,
Combining Propositions~\ref{lower} and \ref{upper}  together yields $\dim_{\rm H} E_\sigma(B)=\dim_{\rm H}G_\sigma(B,f)
=\tau(B)/2$.\qed

\subsection*{Acknowledgments}
We thank the referee for his/her careful reading of the manuscript and valuable suggestions. We thank Gerardo Gonz\'alez Robert, Mumtaz Hussain, Nikita Shulga for fruitful discussions.
The second named author was partially supported by the JSPS KAKENHI 
23K20220. 
\end{document}